\theoremstyle{plain}
\newtheorem{theorem}{Theorem}[section]
\newtheorem{lemma}[theorem]{Lemma}
\newtheorem{proposition}[theorem]{Proposition}
\newtheorem{remark}[theorem]{Remark}
\newtheorem{definition}[theorem]{Definition}
\def\cat{\protect\operatorname{cat}}
\def\secat{\protect\operatorname{secat}}
\def\cl{\protect\operatorname{cl}}
\def\TC{\protect\operatorname{TC}}
\def\F{\protect\operatorname{Conf}}
\def\cl{\protect\operatorname{cl}}
\def\R{\protect\mathbb{R}}
\def\co{\colon\thinspace}
\title{Sequential motion planning of \\non-colliding particles in euclidean spaces}
\author{Jes\'us Gonz\'alez\textsuperscript{1}}
\author{Mark Grant\textsuperscript{2}}
\thanks{\textsuperscript{1}~~Supported by Conacyt Research Grant 221221.}
\thanks{\textsuperscript{2}~~Corresponding author.}
\address{Departamento de Matem\'aticas, Centro de Investigaci\'on y de Estudios Avanzados del IPN, Av.~IPN 2508, Zacatenco, M\'exico City 07000, M\'exico}
\email{jesus@math.cinvestav.mx}
\address{School of Mathematics \& Statistics, Newcastle University, Herschel Building, Newcastle upon Tyne NE1 7RU, UK}
\email{mark.grant@newcastle.ac.uk}
\keywords{Robot motion planning, higher topological complexity, sectional category, configuration spaces, moving obstacles}
\subjclass[2010]{55R80, 55S40 (Primary); 55M30, 68T40 (Secondary)}
\begin{document}
\begin{abstract}
In terms of Rudyak's generalization of Farber's topological complexity of the path motion planning problem in robotics, we give a complete description of the topological instabilities in any sequential motion planning algorithm for a system consisting of non-colliding autonomous entities performing tasks in space whilst avoiding collisions with several moving obstacles. The Isotopy Extension Theorem from manifold topology implies, somewhat surprisingly, that the complexity of this problem coincides with the complexity of the corresponding problem in which the obstacles are stationary.\end{abstract}
\maketitle
\tableofcontents

\section{Statement of results}
We consider the sequential motion planning problem for $n$ objects moving in $\R^m$, avoiding collisions with each other and with $p$ moving obstacles, the trajectories of which are prescribed in advance.

\medskip
We begin by fixing some notation. For a $k$-tuple $a=(a_1,\ldots,a_k)\in\mathbb{R}^k$, let $\overline{a}$ denote the underlying set $\{a_1,\ldots,a_k\}$. Let the trajectories of $p$ moving particles in $\mathbb{R}^m$---the `obstacles'---be given by continuous maps $$q_i:I=[0,1]\to \R^m, \mbox{  \ } i=1,\ldots ,p.$$ We assume that the obstacles do not collide with each other, so that $i\neq j$ implies that $q_i(t)\neq q_j(t)$ for all $t\in I$. Taken together these trajectories form a map
$$
\mathcal{Q}=(q_1, \ldots , q_p): I\to \F(\R^m,p)
$$
where $\F(X,p)$ stands for the usual configuration space of $p$-tuples of pairwise distinct points in a given space $X$. We are interested in the motion planning problem which takes as input a sequence of configurations $A_1,A_2,\ldots , A_s$ with each $$A_i \in \F\left(\mathbb{R}^m-\overline{\mathcal{Q}\left(\frac{i-1}{s-1}\right)},n\right),$$ and outputs a path $\gamma\co I\to \F(\R^m,n)$ such that
\begin{equation}\label{avoids}
\overline{\gamma(t)}\cap\overline{\mathcal{Q}(t)}=\emptyset\qquad\mbox{for all values of }t\in I,
\end{equation}
and
\begin{equation}\label{visits}
\gamma\left(\frac{i-1}{s-1}\right) = A_i\qquad \mbox{for }i=1,\ldots , s.
\end{equation}

The above setting models mathematically the problem of finding trajectories for $n$ objects in $\R^m$ (here represented as points), which navigate from an initial configuration to a final configuration and visit $s-2$ intermediate configurations in sequence, whilst simultaneously avoiding collisions with each other and with $p$ moving obstacles (also represented by points in $\R^m$). Problems of this type arise naturally in practice, for instance in air traffic control or in factory assembling cycles. In such situations, our results become relevant (indeed critical) when the number of particles to be controlled becomes large, so that efficient motion planners have a clear advantage over on-line decision heuristics.

\medskip
We next indicate how solutions of this motion planning problem correspond to sections of a particular fibration. Let
$$
E_n(\mathcal{Q})=\left\{ \gamma\co I\to \F(\R^m,n)\mid \overline{\gamma(t)}\cap\overline{\mathcal{Q}(t)}=\emptyset
\mbox{ for all }t\in I \right\},
$$
topologised as a subset of the path space $\F(\R^m,n)^I$, and let
$$
B_{n,s}(\mathcal{Q})=\left\{(A_1,\ldots , A_s)\in \F(\R^m,n)^{\times s} \mid\overline{A_i}\cap\overline{\mathcal{Q}\left(\frac{i-1}{s-1}\right)}=\emptyset\mbox{ for }i=1,\ldots , s\right\},
$$
topologised as a subset of the $s$-fold cartesian product $\F(\R^m,n)^{\times s}$. There is an evaluation map (which will be shown below to be a fibration)
\begin{equation}\label{mmo}
\pi_{n,s}(\mathcal{Q})\co E_n(\mathcal{Q})\to B_{n,s}(\mathcal{Q})
\end{equation}
sending a path $\gamma$ to the $s$-tuple $\big( \gamma(0),\gamma(\frac{1}{s-1}),\ldots , \gamma(\frac{s-2}{s-1}),\gamma(1)\big)$. Note that a (possibly non-continuous) section of this map corresponds to an algorithm solving our motion planning problem. Asking for the minimal number of continuity instabilities (in the sense of~\cite{MR2074919}) among such algorithms leads to the following:

\begin{definition}{\em
The complexity of the $s$-sequential motion planning problem for $n$ objects moving in $\R^m$, avoiding collisions with each other and with $p$ moving obstacles parametrized by $\mathcal{Q}$, is given by the sectional category\footnote{We work with the reduced version of sectional category.} $\secat(\pi_{n,s}(\mathcal{Q}))$ of the fibration $\pi_{n,s}(\mathcal{Q})$.
}\end{definition}

As a first step toward the determination of the above invariant, we generalize \cite[Theorem~3.1]{MR2359030} by showing that $\secat(\pi_{n,s}(\mathcal{Q}))$ is independent of the actual trajectory $\mathcal{Q}$ of the obstacles. In particular it is enough to consider the case when the $p$ obstacles remain stationary.

\begin{theorem}\label{moving}
The map $\pi_{n,s}(\mathcal{Q})\co E_n(\mathcal{Q})\to B_{n,s}(\mathcal{Q})$ is a fibration, whose fibrewise homeomorphism type is independent of the trajectories of the moving obstacles. Indeed
\begin{equation}\label{movest}
\secat\left(\pi_{n,s}(\mathcal{Q})\right) = \TC_s\left(\F(\R^m-Q_p,n)\right).
\end{equation}
Here $Q_p=\left\{ q_1(0),\ldots , q_p(0)\right\}\subset\R^m$, and $\TC_s(X)$ stands for Rudyak's higher topological complexity of a space $X$ introduced and studied in~\cite{bgrt,MR2593704}.\end{theorem}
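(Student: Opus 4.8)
The plan is to identify $\pi_{n,s}(\mathcal{Q})$, up to fibrewise homeomorphism, with the evaluation fibration $e_s\co X^I\to X^{\times s}$, $\delta\mapsto\bigl(\delta(0),\delta(\tfrac1{s-1}),\ldots,\delta(\tfrac{s-2}{s-1}),\delta(1)\bigr)$, where $X=\F(\R^m-Q_p,n)$; this is precisely the fibration whose (reduced) sectional category is $\TC_s(X)$. All three assertions of the theorem then follow at once. First, $e_s$ is a Hurewicz fibration for every space $X$ --- the inclusion of the $s$ points $\{0,\tfrac1{s-1},\ldots,1\}$ into $I$ is a closed cofibration, so $X^I\to X^{\times s}$ is a fibration --- hence so is $\pi_{n,s}(\mathcal{Q})$. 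Second, the fibrewise homeomorphism type of $\pi_{n,s}(\mathcal{Q})$ then depends only on the homeomorphism type of $X$, which is independent of the choice of the $p$-point set $Q_p\subset\R^m$ because any two such sets are interchanged by a homeomorphism of $\R^m$. Third, \eqref{movest} holds because reduced sectional category is a fibrewise homotopy invariant.

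The first step is a change of viewpoint. Sending $\gamma$ to $t\mapsto\bigl(q_1(t),\ldots,q_p(t),\gamma_1(t),\ldots,\gamma_n(t)\bigr)$ identifies $E_n(\mathcal{Q})$ homeomorphically with the space of lifts of $\mathcal{Q}\co I\to\F(\R^m,p)$ along the Fadell--Neuwirth fibre bundle $\rho\co\F(\R^m,p+n)\to\F(\R^m,p)$ that forgets the last $n$ points; condition~\eqref{avoids} is exactly what makes the enlarged path land in $\F(\R^m,p+n)$. Equivalently, $E_n(\mathcal{Q})$ is the space of sections of the pulled-back bundle $\mathcal{Q}^*\rho\co\mathcal{Q}^*\F(\R^m,p+n)\to I$, a fibre bundle with fibre $\F(\R^m-Q_p,n)$. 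Under the same identification $B_{n,s}(\mathcal{Q})$ becomes the product of the fibres of $\mathcal{Q}^*\rho$ over the points $t_i=\tfrac{i-1}{s-1}$, and $\pi_{n,s}(\mathcal{Q})$ becomes the map sending a section to its values at $t_1,\ldots,t_s$.

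The crux is to trivialise the bundle $\mathcal{Q}^*\rho$ over $I$. Abstractly this is automatic, since a fibre bundle over $[0,1]$ is trivial, but to retain control of the fibrewise structure I would produce an explicit trivialisation from an ambient isotopy. Regarding $\mathcal{Q}$ as an isotopy of the $0$-dimensional (hence locally flat) submanifold $Q_p\subset\R^m$, the topological Isotopy Extension Theorem --- or, if one prefers to stay elementary, a direct construction with bump functions supported in small disjoint balls over a fine subdivision of $I$ --- yields a continuous family of homeomorphisms $H_t\co\R^m\to\R^m$ with $H_0=\mathrm{id}$ and $H_t(q_i(0))=q_i(t)$ for all $i$ and $t$. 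Each $H_t$ restricts to a homeomorphism $\R^m-Q_p\to\R^m-\overline{\mathcal{Q}(t)}$, and $(t,(x_1,\ldots,x_n))\mapsto\bigl(t,(q_1(t),\ldots,q_p(t),H_t(x_1),\ldots,H_t(x_n))\bigr)$ is a homeomorphism $I\times\F(\R^m-Q_p,n)\xrightarrow{\ \cong\ }\mathcal{Q}^*\F(\R^m,p+n)$ over $I$. This is the only point at which manifold topology is needed, and it is the step I expect to require the most care, chiefly in dealing properly with the merely continuous hypothesis on $\mathcal{Q}$.

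Finally I would assemble the pieces. The trivialisation above induces a homeomorphism of section spaces $E_n(\mathcal{Q})\cong\F(\R^m-Q_p,n)^I$ and, being fibrewise over $I$, homeomorphisms of the fibres over the $t_i$, hence $B_{n,s}(\mathcal{Q})\cong\F(\R^m-Q_p,n)^{\times s}$; these fit into a commuting square identifying $\pi_{n,s}(\mathcal{Q})$ with $e_s$ for $X=\F(\R^m-Q_p,n)$. By the first paragraph this shows that $\pi_{n,s}(\mathcal{Q})$ is a fibration, that its fibrewise homeomorphism type does not depend on $\mathcal{Q}$, and that $\secat(\pi_{n,s}(\mathcal{Q}))=\secat(e_s)=\TC_s(\F(\R^m-Q_p,n))$, which is~\eqref{movest}.
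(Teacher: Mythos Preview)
Your proposal is correct and follows essentially the same approach as the paper: use the topological Isotopy Extension Theorem to obtain an ambient isotopy of $\R^m$ extending $\mathcal{Q}$, and apply it coordinatewise to build a commutative square identifying $\pi_{n,s}(\mathcal{Q})$ with the evaluation fibration $e_s$ for $X=\F(\R^m-Q_p,n)$. Your detour through the Fadell--Neuwirth bundle and its pullback $\mathcal{Q}^*\rho$ is a pleasant conceptual repackaging but not a genuinely different argument; the paper simply writes down the maps $F$ and $G$ directly from the ambient isotopy without this intermediate interpretation.
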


The actual value of the right-hand side in~(\ref{movest}) is given as follows:

\begin{theorem}\label{princi1}
Let $m,n,p,s$ be nonnegative integers with $n\geq1$, $m,s\geq2$, and such that $n\ge2$ if $p=0$. Then
\begin{equation}\label{global}
\TC_s(\F(\mathbb{R}^m-Q_p,n))=\begin{cases}
s(n-1)-1, & \mbox{if $p=0$ and $m\equiv0\bmod2$;}\\
s(n-1), & \mbox{if $p=0$ and $m\equiv1\bmod2$;}\\
sn-1, & \mbox{if $p=1$ and $m\equiv0\bmod2$;}\\
sn, & \mbox{otherwise.}
\end{cases}
\end{equation}
\end{theorem}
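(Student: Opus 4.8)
The plan is to bound $\TC_s(\F(\R^m-Q_p,n))$ above and below by the asserted quantity, separating the regime $p\le1$ (which reduces to ordinary configuration spaces of $\R^m$) from the regime $p\ge2$ (which always contributes $sn$), using only classical facts about configuration spaces together with standard estimates for $\TC_s$.

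First I would dispose of the reduction. The Fadell--Neuwirth fibration $\F(\R^m,n+1)\to\R^m$, evaluation at the last point, has fibre $\F(\R^m-Q_1,n)$ and contractible base, so $\F(\R^m-Q_1,n)\simeq\F(\R^m,n+1)$; as $\TC_s$ is a homotopy invariant this folds the case $p=1$ into $p=0$ after replacing $n$ by $n+1$. For the upper bounds I would use $\TC_s(X)\le s\cat(X)$ together with $\cat(\F(\R^m-Q_p,n))\le n$, and $\le n-1$ when $p=0$: the iterated Fadell--Neuwirth fibrations exhibit $\F(\R^m-Q_p,n)$ as a tower whose successive fibres are wedges of $(m-1)$-spheres, so its homotopy dimension is $n(m-1)$ (resp.\ $(n-1)(m-1)$) and its connectivity is at least $m-2$, whence the bound on $\cat$ by the dimension/connectivity estimate. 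This already gives $\TC_s\le sn$ for $p\ge1$ and $\TC_s\le s(n-1)$ for $p=0$, which is the stated value except when $m$ is even and $p\le1$. In that remaining case I would use the equivalence $\F(\R^m,N)\simeq\{\text{configurations with }x_1=0,\ |x_2|=1\}$ and, recording the direction of $x_2$, the fibration of the latter over $S^{m-1}$ with fibre $\F(\R^m-Q_2,N-2)$; combining the sub-additive estimate $\TC_s(E)\le\TC_s(B)+s\cat(F)$ for a fibration with $\TC_s(S^{m-1})=s-1$ (an odd sphere) and $\cat(\F(\R^m-Q_2,N-2))\le N-2$ yields $\TC_s(\F(\R^m,N))\le s(N-1)-1$, which gives $s(n-1)-1$ for $p=0$ and $sn-1$ for $p=1$.

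For the lower bounds I would compute the $s$-th zero-divisor cup-length $\zcl_s$ in integral cohomology (which is torsion-free here) and invoke $\TC_s(X)\ge\zcl_s(X)$, using the standard presentation of $H^*(\F(\R^m-Q_p,n))$: degree-$(m-1)$ generators $A_{ij}$ ($1\le i<j\le n$) and $B_{ik}$ ($1\le i\le n$, $1\le k\le p$), the three-term relations, and the basis of monomials dictated by the Fadell--Neuwirth tower (for each ``row'' $i$, at most one of $B_{i1},\dots,B_{ip},A_{1i},\dots,A_{i-1,i}$). The relevant elementary facts are: every generator squares to zero; a product of distinct generators lying in distinct rows is a basis element; $B_{ik}B_{ik'}=0$ for $k\ne k'$ while within a single row the $A$-generators multiply non-trivially; and for the canonical zero-divisor $\overline u^{(c)}=u\otimes1\otimes\cdots-1^{\otimes(c-1)}\otimes u\otimes1^{\otimes(s-c)}$ one has $(\overline u^{(c)})^2=0$ precisely when $m$ is even. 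Then:
\begin{enumerate}[(i)]
\item when $p\ge2$, the product of all $sn$ zero-divisors $\{\overline{B}_{i1}^{(c)}:1\le i\le n,\ 2\le c\le s\}\cup\{\overline{B}_{i2}^{(2)}:1\le i\le n\}$ expands, using $B_{i1}B_{i2}=0$, as a sum of pairwise distinct basis-monomial tensors with unit coefficients, hence is non-zero, giving $\TC_s\ge sn$;
\item when $p=0$ and $m$ is odd, one may take the $s(n-1)$ zero-divisors $\{\overline{A}_{1i}^{(c)}:2\le i\le n,\ 2\le c\le s\}$ together with one further copy of $\overline{A}_{1i}^{(2)}$ for each $i$ (admissible since $(\overline{A}_{1i}^{(2)})^2=\pm2\,A_{1i}\otimes A_{1i}\ne0$); their product is $\pm2^{\,n-1}(A_{12}A_{13}\cdots A_{1n})^{\otimes s}\ne0$, giving $\TC_s\ge s(n-1)$;
\item when $p=0$ and $m$ is even, I would take the $s(n-1)-1$ zero-divisors $\{\overline{A}_{1i}^{(c)}:2\le i\le n,\ 2\le c\le s\}\cup\{\overline{A}_{2i}^{(2)}:3\le i\le n\}$ and show their product is non-zero by reducing every monomial in its expansion to the Fadell--Neuwirth basis via the three-term relations and tracking which of them can contribute to a suitably chosen target tensor, giving $\TC_s\ge s(n-1)-1$.
\end{enumerate}
The case $p=1$ then follows from (ii) and (iii) via $\F(\R^m-Q_1,n)\simeq\F(\R^m,n+1)$.

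The main obstacle is the non-vanishing claim in (iii). Unlike in (i), where $B_{i1}B_{i2}=0$ kills all but two terms in the relevant partial product, the $A$-generators in a single row of $\F(\R^m,n)$ never multiply to zero, so once a row carries more than one zero-divisor the expansion produces monomials outside the Fadell--Neuwirth basis, and cancellation after rewriting must be precluded. This is the higher-$s$ analogue of the Farber--Yuzvinsky computation; the key is to arrange the zero-divisors and the target monomial so that every ``offending'' contribution is forced to carry an extra $A_{12}$ and therefore cannot reach the target. A secondary technical point is to pin down the precise fibred estimate $\TC_s(E)\le\TC_s(B)+s\cat(F)$ used in the even-$m$ upper bound.
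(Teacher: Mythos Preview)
Your reductions ($p=1$ to $p=0$ via Fadell--Neuwirth), the dimension/connectivity upper bounds, and the lower-bound zero-divisor calculations are essentially the paper's arguments; your explicit zero-divisors in (i)--(iii) coincide (up to cosmetic changes) with the paper's classes $\nu_s$, $\pi$, and $\mu_s$, and your inductive strategy for (iii) is exactly the one the paper carries out.

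The genuine gap is your upper bound for even $m$ with $p\le 1$. The ``sub-additive estimate'' $\TC_s(E)\le\TC_s(B)+s\cat(F)$ for a fibration $F\to E\to B$ is \emph{false} in general: the Klein bottle $K$ fibres over $S^1$ with fibre $S^1$, yet $\TC_2(K)=4$ while $\TC_2(S^1)+2\cat(S^1)=1+2=3$. So this step does not go through, and the ``secondary technical point'' you flag is in fact fatal as stated. (For $m=2$ your fibration over $S^1$ happens to be a genuine product $\F(\mathbb{C},n)\simeq S^1\times Y$, where product sub-additivity of $\TC_s$ \emph{does} apply; this is exactly how the paper handles $m=2$. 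But for even $m\ge 4$ the bundle over $S^{m-1}$ is not a product, and no such estimate is available.)

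The paper's route for even $m\ge 4$ is quite different and does not use any fibred $\TC_s$ inequality. It first proves the purely cohomological fact $\cl_s(\F(\R^m,n))<s(n-1)$ (Lemma~\ref{indpdt}(i), obtained from the $m=2$ case by the grading isomorphism of cohomology rings). It then invokes Schwarz's obstruction theory for sectional category (Theorem~\ref{classabstr}): since $\F(\R^m,n)$ is $(m-2)$-connected with homotopy dimension $(n-1)(m-1)$, there is a \emph{single} obstruction to $\TC_s\le s(n-1)-1$, and it is the $s(n-1)$-fold cup-power of the primary obstruction $\eta\in H^{m-1}(\F(\R^m,n)^s)$. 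By Remark~\ref{kernel}, $\eta$ lies in the kernel of the iterated diagonal, so $\eta^{s(n-1)}=0$ by the cup-length bound just established. This is the step your proposal is missing.
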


The assumption that rules out the case $(n,p)=(1,0)$ is just meant to simplify the expression on the right-hand side of~(\ref{global}), as it avoids the case of the contractible space $\F(\mathbb{R}^m,1)$.

\medskip
Theorems~\ref{moving} and~\ref{princi1} generalize results in~\cite{MR2470845,MR2359030,FY}; our method of proof follows those used by Farber, Grant, and Yuzvinsky.

\section{Rudyak's higher TC and proof of Theorem~\ref{moving}}\label{s-introd}
Recall from~\cite{MR2593704} that the {\it $s$-th topological complexity} of a path-connected space $X$, $\TC_s(X)$, is the sectional category of the fibration
$$
e_s=e_s^X:X^{J_s}\rightarrow X^s,\quad e_s(\gamma)=(\gamma({1_1}),\ldots ,\gamma({1_s}))
$$
where $J_s$ is the wedge of $s$ closed intervals $[0,1]$ (each with $0\in[0,1]$ as the base point), and $1_i$ stands for $1$ in the $i^{\mathrm{th}}$ interval. Equivalently, $\TC_s(X)$ can be defined as the sectional category of the evaluation map $$\pi_s=\pi_s^X\colon X^{[0,1]}\to X^s,\quad \pi_s(\gamma)=\left(\gamma(0),\gamma\left(\frac{1}{s-1}\right),\gamma\left(\frac{2}{s-1}\right),\ldots ,\gamma\left(\frac{s-2}{s-1}\right),\gamma(1)\right).
$$
We refer the reader to~\cite{bgrt} for basic properties of the $s$-th topological complexity.

\begin{proof}[Proof of Theorem~$\ref{moving}$]
It suffices to show that there are homeomorphisms $F$ and $G$ rendering a commutative diagram
\begin{equation}\label{fibre}
\xymatrix{
E_n(\mathcal{Q}) \ar[r]^-{F} \ar[d]^{\pi_{n,s}(\mathcal{Q})} & \F(\R^m-Q_p,n)^{[0,1]} \ar[d]^{\pi_s} \\
B_{n,s}(\mathcal{Q}) \ar[r]^-{G} & \F(\R^m-Q_p,n)^{\times s}.
}
\end{equation}
By the topological Isotopy Extension Theorem (see \cite[Corollary 1.4]{MR0283802}, for instance) there exists an ambient isotopy of $\R^m$ extending the isotopy of $p$ points $\mathcal{Q}(t)$. Explicitly, there exists a family of homeomorphisms $\varphi_t\co\R^m\to \R^m$, varying continuously with $t\in I$, such that
\begin{enumerate}
\item $\varphi_0=\mathrm{id}\co\R^m\to\R^m$, and
\item $\varphi_t(q_i(t))=q_i(0)$ for all $t\in I$ and $i=1,\ldots, p$.
\end{enumerate}
The map $G$ is then defined by
$$
G(A_1,\ldots , A_s)=\left(\varphi_0(A_1),\ldots ,\varphi_{\frac{i-1}{s-1}}(A_i), \ldots , \varphi_1(A_s)\right)
$$
for $(A_1,\ldots , A_s)\in B_{n,s}(\mathcal{Q})$. Likewise, for $\gamma\in E_n(\mathcal{Q})$ and $t\in[0,1]$, $F(\gamma)(t)$ is the $n$-tuple obtained by applying $\varphi_t$ to each coordinate of $\gamma(t)$. The maps $F$ and $G$ are clearly homeomorphisms, which make the diagram (\ref{fibre}) commute, thus completing the proof.
\end{proof}

\section{Homotopy obstructions for multisectioning a fibration}\label{genus}
For a fibration $p:E\to B$ with fiber $F$, let $p(\ell):E(\ell)\to B$ be the ($\ell+1$)-th fiberwise join power of~$p$. This is a fibration with fiber $F^{\star(\ell+1)}$, the $(\ell+1)$-iterated join of $F$ with itself. It is well known that, if $B$ is paracompact, a necessary and sufficient condition for having $\secat(p)\leq \ell$ is that $p(\ell)$ admits a global section. Thus, the following result---a direct generalization of~\cite[Theorem~1]{Sch58}---gives a useful cohomological identification of the first obstruction for multi-sectioning $p$.

\begin{theorem}\label{classabstr}
Let $p:E\to B$ be a fibration with fiber $F$ whose base $B$ is a CW complex. Assume $p$ admits a section $\phi$ over the $k$-skeleton $B^{(k)}$ of $B$ for some $k\geq1$. If $F$ is $k$-simple and the obstruction cocycle to the extension of $\phi$ to $B^{(k+1)}$ lies in the cohomology class
\begin{equation}\label{eta}
\eta\in H^{k+1}(B;\{\pi_k(F)\}),
\end{equation}
then $p(\ell)$ admits a section over $B^{(k+1)(\ell+1)-1}$ whose obstruction cocycle to extending to $B^{(k+1)(\ell+1)}$ belongs to the cohomology class
\begin{equation}\label{potencia}
\eta^{\ell+1}\in H^{(k+1)(\ell+1)}(B;\{\pi_{k\ell+k+\ell}(F^{\star(\ell+1)})\}).
\end{equation}
Here $\eta^{\ell+1}$ denotes the image of the $(\ell+1)$-fold cup power of $\eta$ under the $\pi_1(B)$-homomorphism of coefficients $\pi_k(F)^{\otimes (\ell+1)}\to \pi_{k\ell+k+\ell}(F^{\star(\ell+1)})$ given by iterated join of homotopy classes.
\end{theorem}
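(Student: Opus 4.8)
The plan is to imitate Schwarz's classical argument identifying the first obstruction to a section of the join power $p(\ell)$ with a cup power of the first obstruction to sectioning $p$ itself, being careful about the non-simply-connected situation by working with cohomology with local (i.e. $\pi_1(B)$-twisted) coefficients throughout. First I would recall the basic setup: since $B$ is a CW complex and $\phi$ is a section over $B^{(k)}$, the fiber $F$ being $k$-simple means $\pi_k(F)$ is a well-defined $\pi_1(B)$-module and the obstruction cocycle to extending $\phi$ over $B^{(k+1)}$ represents a class $\eta\in H^{k+1}(B;\{\pi_k(F)\})$. The key point to establish is that the join power $p(\ell):E(\ell)\to B$ is highly connected on fibers: since the fiber is $F^{\star(\ell+1)}$ and joining raises connectivity, if $F$ were $(k-1)$-connected we would get the fiber $((k+1)(\ell+1)-2)$-connected. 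In general $F$ need not be highly connected, but what matters is the following local statement: a section of $p$ over $B^{(k)}$ produces, via the fiberwise join inclusion $F\hookrightarrow F^{\star(\ell+1)}$ applied coordinatewise (using $\ell+1$ copies of the section data), a section of $p(\ell)$ over $B^{(k)}$, and indeed over $B^{((k+1)(\ell+1)-1)}$ by an inductive skeleton-by-skeleton extension argument that uses the fact that in each intermediate dimension the relevant relative homotopy group of the fiber vanishes — this is where one invokes that joins of the various partial sections kill homotopy in the appropriate range.

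The technical heart is then the identification of the resulting obstruction cocycle. I would set up the $(\ell+1)$-fold fiberwise join concretely: a point of $F^{\star(\ell+1)}$ is a formal convex combination $t_0 x_0 \oplus \cdots \oplus t_\ell x_\ell$ with $x_j\in F$ and $\sum t_j = 1$, and the iterated-join map on homotopy $\pi_k(F)^{\otimes(\ell+1)}\to\pi_{k(\ell+1)+\ell}(F^{\star(\ell+1)})$ sends a tuple of spheroids to their join, which one checks is a $\pi_1(B)$-equivariant map of coefficient systems. The combinatorial model of the join of cochains is then exactly the cup product: if the obstruction to extending the $j$-th copy of the section over a cell is represented locally by a $\pi_k(F)$-valued cocycle $c_j$ (all equal to a representative of $\eta$ after choosing the same section), then the obstruction to extending their fiberwise join over a product-like decomposition of an $((k+1)(\ell+1))$-cell is the join of the $c_j$, which under the coefficient homomorphism becomes the cup power $c_0 \smile \cdots \smile c_\ell$ representing $\eta^{\ell+1}$. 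Concretely I would verify this by computing the obstruction on a single cell $e^{(k+1)(\ell+1)}$ using the standard decomposition of a product of simplices (or the join of spheres $S^k\star\cdots\star S^k\cong S^{(k+1)(\ell+1)-1}$), so that the attaching behavior of the section over the boundary is governed precisely by the join of the $\ell+1$ individual obstruction spheroids.

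The main obstacle I anticipate is making the inductive extension of the join section over the intermediate skeleta $B^{(k+1)}\subset B^{(k+2)}\subset\cdots\subset B^{((k+1)(\ell+1)-1)}$ fully rigorous while keeping track of local coefficients, since the fiber $F^{\star(\ell+1)}$ is not assumed simply connected and one must argue that the obstructions in dimensions strictly between $k+1$ and $(k+1)(\ell+1)$ all vanish. The cleanest route is probably to pass to the universal cover of $B$, or more precisely to argue cell-by-cell using the $k$-simplicity hypothesis to control $\pi_k(F)$ and then to observe that the first potentially nonzero obstruction for the join power occurs exactly in dimension $(k+1)(\ell+1)$ because a section over $B^{(k)}$ of $p$ gives $\ell+1$ "independent directions" in the join fiber, each of which individually can fail only in dimension $k+1$, and a standard transversality/general-position argument in the join shows their first common obstruction is the product of the individual ones. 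A secondary subtlety is checking naturality of the coefficient homomorphism $\pi_k(F)^{\otimes(\ell+1)}\to\pi_{k(\ell+1)+\ell}(F^{\star(\ell+1)})$ under the $\pi_1(B)$-action, which follows because the join construction is functorial and the $\pi_1(B)$-action on each factor is induced by the same covering data; I would state this as a lemma and dispatch it quickly. Once these pieces are in place, the displayed conclusion \eqref{potencia} follows by assembling the local computations into a global statement about the obstruction class of $p(\ell)$.
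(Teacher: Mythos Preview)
The paper does not actually supply a proof of this theorem: it is stated as ``a direct generalization of~\cite[Theorem~1]{Sch58}'' and followed only by Remark~\ref{kernel}, with no argument given. Your proposal is precisely a sketch of Schwarz's original argument, adapted to keep track of local coefficients, so it is entirely in line with what the paper is implicitly invoking.

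Since there is no proof in the paper to compare against beyond the citation, the relevant question is whether your sketch is sound. It is, in outline: the mechanism is exactly that the join of $\ell+1$ copies of a partial section over $B^{(k)}$ extends over skeleta up through dimension $(k+1)(\ell+1)-1$ because the intermediate obstructions vanish (the join raises connectivity in the relevant relative sense), and the first surviving obstruction is the cup power under the join-of-spheroids coefficient map. One small caution: your phrasing suggests some uncertainty about why the intermediate obstructions vanish when $F$ is not assumed $(k-1)$-connected. The clean way to handle this (and the way Schwarz does) is not a transversality argument but rather to note that the $(\ell+1)$-fold fiberwise join can be filtered so that on each cell the obstruction factors through the join of the individual attaching spheroids; since each individual spheroid is an $S^k$ mapping to $F$, their join is an $S^{(k+1)(\ell+1)-1}$ mapping to $F^{\star(\ell+1)}$, and there simply are no lower-dimensional obstruction cocycles produced by this construction. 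This is cleaner than invoking general position and avoids any appeal to high connectivity of $F$ itself.
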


\begin{remark}\label{kernel}{\em
Assume in the theorem that $F$ is $(k-1)$-connected. Since $\eta$ depends only on $p$, and since the pull-back $p^*(p)$ admits a tautological section, we have $p^*(\eta)=0$ \emph{a fortiori}.
}\end{remark}

\section{Higher TC of Euclidean configuration spaces}\label{case-of-TC}

This section's goal is to prove the $p=0$ case of Theorem~\ref{princi1}, namely:
\begin{theorem}\label{htcofc}
Let $n,m,s\geq2$. The $s$-th higher topological complexity of  the configuration space $\F(\mathbb{R}^m,n)$ of ordered $n$-tuples on the $m$-dimensional Euclidean space is given by
\begin{equation}\label{respuno}
\TC_s(\F(\mathbb{R}^m,n))=\begin{cases}
s(n-1)-1, & m \mbox{ even;}\\
s(n-1), & m \mbox{ odd.}
\end{cases}
\end{equation}
\end{theorem}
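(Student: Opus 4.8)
The plan is to prove the two inequalities in~(\ref{respuno}) separately, following the Farber--Grant--Yuzvinsky strategy. For the upper bound $\TC_s(\F(\R^m,n))\le s(n-1)$ (respectively $s(n-1)-1$ when $m$ is even), I would use the known dimension/connectivity estimate $\TC_s(X)\le \frac{s\,\hdim(X)+ \text{(something)}}{\conn(X)+1}$ together with the fact that $\F(\R^m,n)$ is $(m-2)$-connected and has the homotopy type of a CW complex of dimension $(n-1)(m-1)$. This already gives $\TC_s\le \frac{s(n-1)(m-1)}{m-1}=s(n-1)$. To shave off the extra $1$ in the even case one argues as in~\cite{FY}: $\F(\R^m,n)$ for $m$ even has the rational (indeed, over a suitable field) cohomology of a product of odd spheres, and a Poincar\'e-duality / top-class argument shows the top cohomology is not detected by the relevant zero-divisor product, so the top-dimensional obstruction vanishes. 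Alternatively, I would invoke directly the sequential analogue of Farber--Yuzvinsky's computation $\TC(\F(\R^m,n))$ and its proof, which generalizes formally.

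For the lower bound---which I expect to be the main obstacle---the idea is to exhibit a non-trivial $(s(n-1))$-fold (resp. $(s(n-1)-1)$-fold) product of zero-divisors in $H^*(\F(\R^m,n)^{\times s})$, i.e. in the kernel of the cup product $H^*(\F(\R^m,n))^{\otimes s}\to H^*(\F(\R^m,n))$ induced by the $s$-fold diagonal. Recall $H^*(\F(\R^m,n))$ is generated by classes $A_{ij}$ in degree $m-1$ (for $1\le i<j\le n$) subject to the Arnold/Orlik--Solomon relations and the relations $A_{ij}^2=0$. For $\TC$ (the case $s=2$) Farber--Yuzvinsky used the classes $\bar A_{ij}=1\otimes A_{ij}-A_{ij}\otimes 1$ and showed that a suitable product $\prod \bar A_{i,i+1}$ of $n-1$ of them (times one more in the odd case) is non-zero. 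For general $s$ one replaces these by the $s$-fold zero-divisors $\overline{A_{ij}}=\sum_{k=1}^s 1^{\otimes(k-1)}\otimes A_{ij}\otimes 1^{\otimes(s-k)} - (s-1)\,A_{ij}\otimes 1^{\otimes(s-1)}$, or more conveniently the differences $1^{\otimes(k-1)}\otimes A_{ij}\otimes 1^{\otimes(s-k)} - A_{ij}\otimes 1^{\otimes(s-1)}$ for $k=2,\ldots,s$; multiplying together, for each of the $n-1$ ``consecutive'' pairs $(1,2),(2,3),\ldots,(n-1,n)$, all $s-1$ such differences, yields a candidate product of $s(n-1)$ zero-divisors. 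The key computation is to expand this product in the monomial basis of $H^*(\F(\R^m,n))^{\otimes s}$ afforded by the Orlik--Solomon basis (no-broken-circuit sets) and verify that a specific monomial survives with coefficient $\pm1$; here the relation $A_{ij}^2=0$ and a careful bookkeeping of signs (depending on the parity of $m-1$, which is why $m$ even versus odd matters) do the work. When $m$ is odd the classes have even degree, everything commutes, and one can push the product one step further to get $s(n-1)$; when $m$ is even the top class argument caps the lower bound at $s(n-1)-1$.

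Concretely I would organize the proof as follows. First, record the structure of $H^*(\F(\R^m,n);\Bbbk)$ with its Orlik--Solomon basis and the behaviour of the $s$-fold diagonal; choose $\Bbbk=\mathbb{Q}$ if $m$ is even and $\Bbbk=\mathbb{Z}/2$ (or $\mathbb{Q}$, after sign care) if $m$ is odd. Second, prove the upper bound via the connectivity--dimension estimate for $\TC_s$, with the parity refinement in the even case coming from the fact that the top cohomology class of $\F(\R^m,n)$ (which is then a rational Poincar\'e-duality-like situation, being homotopy equivalent to a product $\prod_{j=2}^n S^{m-1}$ after... ) cannot arise from a product of $s$ zero-divisors of the required total degree---more cleanly, one cites the sequential generalization of the result that $\TC_s$ drops by one for such spaces. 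Third, and this is the crux, establish the lower bound by the explicit zero-divisor cup-length computation sketched above: define the zero-divisors $v_{j,k}=1^{\otimes(k-1)}\otimes A_{j,j+1}\otimes 1^{\otimes(s-k)} - A_{j,j+1}\otimes 1^{\otimes(s-1)}$ for $1\le j\le n-1$, $2\le k\le s$, form $\omega=\prod_{j,k} v_{j,k}$, expand using $A_{ij}^2=0$ and the Arnold relations, and isolate a surviving top monomial. The main difficulty lies entirely in this last expansion: controlling the sign cancellations and confirming non-vanishing of the chosen monomial in the tensor-power ring, which is where the parity of $m$ enters decisively and where the bookkeeping is most delicate. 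I would handle it by reducing, via a suitable fibration $\F(\R^m,n)\to\F(\R^m,n-1)$ (forgetting the last point) and the product formula for $\TC_s$, to an inductive argument on $n$, with the base case $\F(\R^m,2)\simeq S^{m-1}$ giving $\TC_s(S^{m-1})=s$ for $m$ odd and $s-1$ for $m$ even---the classical sequential-sphere computation---and the inductive step adding $s$ (resp. handled so as to add $s$ but not overshoot) via the fibration-theoretic lower bound for $\TC_s$ of a fibration with section, which is precisely the content of the obstruction-theoretic machinery in Section~\ref{genus}.
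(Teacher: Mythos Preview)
Your upper bound $\TC_s\le s(n-1)$ via connectivity/dimension and your lower bound via an explicit zero-divisor product are both sound and match the paper's approach (the paper uses the generators $A_{i1}$ rather than your $A_{j,j+1}$, and works over $\mathbb{Z}$ for $m$ odd and $\mathbb{Z}_2$ for $m$ even---you have the coefficient choices swapped, but that is a minor slip since you correctly note the parity of $\deg A_{ij}=m-1$).

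The genuine gap is the \emph{refined upper bound} $\TC_s(\F(\R^m,n))\le s(n-1)-1$ for $m$ even. Your proposal here is vague and partly wrong. First, $\F(\R^m,n)$ is \emph{not} homotopy equivalent to $\prod_{j=2}^n S^{m-1}$; the Fadell--Neuwirth tower gives fibrations with wedge-of-sphere fibres, but these do not split as products. Second, showing that the top zero-divisor cup-length is $<s(n-1)$ only bounds $\zcl_s$, which is a \emph{lower} bound for $\TC_s$; it does not by itself bound $\TC_s$ from above. The missing mechanism is precisely Theorem~\ref{classabstr}: the single obstruction to a section of the $s(n-1)$-fold fibrewise join lives in top cohomology and, by that theorem, equals the $s(n-1)$-fold power (under the join-of-homotopy-classes map) of the primary obstruction $\eta$. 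Since $\eta$ lies in the kernel of the diagonal (Remark~\ref{kernel}), the inequality $\cl_s<s(n-1)$ forces this power to vanish. You allude to the obstruction machinery of Section~\ref{genus} only at the very end, and there you attach it to the inductive lower-bound argument rather than to the upper bound, which is where it is actually needed.

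One further point you miss: the obstruction-theoretic argument requires $m\ge 4$ (so the fibre is at least $1$-connected and one avoids local-coefficient complications). The paper handles $m=2$ separately, using the splitting $\F(\mathbb{C},n)\cong\mathbb{C}^*\times \F(\mathbb{C},n)/\mathbb{C}^*$ and subadditivity of $\TC_s$ to get $\TC_s(\F(\mathbb{C},n))<s(n-1)$ directly; this also establishes $\cl_s<s(n-1)$ for all even $m$ by the grading-rescaling observation. Your ``Poincar\'e-duality / top-class'' sketch does not cover this case either.
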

For $s=2$ this specializes to the main result in~\cite{MR2470845}. On the other hand, for $n=2$ this recovers the calculation in~\cite{bgrt} of the higher topological complexity of spheres. It should be possible to adapt the calculations in this paper (say under the ``non-broken-circuit'' viewpoint of~\cite{FY}) to study the higher topological complexity of complements of (suitably nice) complex hyperplane arrangements.

\medskip
The upper bound $\TC_s(\F(\mathbb{R}^m,n))\leq s(n-1)$ is a consequence of the well-known inequality $ \cat(X\times Y)\leq  \cat(X)+ \cat(Y)$, and the easy facts that  $\TC_s(X)\leq  \cat(X^s)$ and $ \cat(F(\mathbb{R}^m,n))=n-1$ (the latter is observed in~\cite{MR2508218}). Alternatively one can use~\cite[Theorem~3.9]{bgrt} since  $\F(\mathbb{R}^m,n)$ is an $(m-2)$-connected space with the homotopy type of a CW complex of dimension $(n-1)(m-1)$. On the other hand, the fact that the right-hand side of~(\ref{respuno}) is a lower bound for $\TC_s(\F(\mathbb{R}^m,n))$ follows from~\cite[Theorem~3.9]{bgrt} and the description below of $\cl_s(\F(\mathbb{R}^m,n))$, the cup length of elements in the kernel of the map induced in cohomology by the iterated (thin) diagonal
\begin{equation}\label{thin}
\F(\mathbb{R}^m,n)\to \F(\mathbb{R}^m,n)^s.
\end{equation}

\begin{proposition}\label{tcl}
Let $n,m,s\geq2$ and take $\delta_m\in\{0,1\}$ with $\delta_m\equiv m\bmod2$. Then $$\cl_s(\F(\mathbb{R}^m, n))= s(n-1)-1+\delta_m.$$
\end{proposition}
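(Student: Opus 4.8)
The plan is to compute the cup-length $\cl_s(\F(\mathbb{R}^m,n))$ directly from the well-known presentation of the cohomology ring $H^*(\F(\mathbb{R}^m,n))$, following the Farber--Grant--Yuzvinsky strategy. Recall that $H^*(\F(\mathbb{R}^m,n);\mathbb{Z})$ is generated by classes $e_{ij}$ ($1\le i<j\le n$) in degree $m-1$, subject to $e_{ij}^2=0$ and the Arnold (three-term) relations; when $m$ is even one should work with $\mathbb{Z}/2$ coefficients so that the $e_{ij}^2=0$ relation is not automatic, whereas when $m$ is odd the $e_{ij}$ are odd-degree and $e_{ij}^2=0$ holds over $\mathbb{Z}$. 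For each $s$-tuple of indices the Künneth theorem gives $H^*(\F(\mathbb{R}^m,n)^s)\cong H^*(\F(\mathbb{R}^m,n))^{\otimes s}$, and I would write $e_{ij}^{(r)}$ for the class $1\otimes\cdots\otimes e_{ij}\otimes\cdots\otimes 1$ with $e_{ij}$ in the $r$-th slot. The kernel of the map induced by the iterated diagonal~(\ref{thin}) is generated by the differences $E_{ij}^{(r)}:=e_{ij}^{(r)}-e_{ij}^{(1)}$ for $2\le r\le s$, so the task is to find the maximal length of a nonzero product of such differences.

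The key computation is an explicit nonzero product. Following Farber--Yuzvinsky, consider for each slot $r$ the ``staircase'' classes $e_{12}^{(r)}, e_{13}^{(r)}, \ldots, e_{1n}^{(r)}$, which already multiply to a nonzero class in $H^*(\F(\mathbb{R}^m,n))$ (this is the standard fact that $\cat(\F(\mathbb{R}^m,n))=n-1$ is detected by cup-length, when $m$ is even over $\mathbb{Z}/2$). I would build the product
$$
\prod_{j=2}^{n} E_{1j}^{(2)}\cdot E_{1j}^{(3)}\cdots E_{1j}^{(s)}\;\cdot\;\bigl(\text{one extra factor when $m$ is even}\bigr),
$$
expand each $E_{1j}^{(r)}=e_{1j}^{(r)}-e_{1j}^{(1)}$, and show that after expansion exactly one monomial survives modulo the relations — namely the one picking $e_{1j}^{(r)}$ from every factor — because any monomial that picks $e_{1j}^{(1)}$ from two different factors $E_{1j}^{(r)}, E_{1j}^{(r')}$ contributes $e_{1j}^{(1)}\cdot e_{1j}^{(1)}=0$. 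This yields a nonzero product of length $(s-1)(n-1)$ in general, and of length $(s-1)(n-1)+1 = s(n-1)-1+\delta_m$ when $m$ is even by appending one further factor of the form $e_{12}^{(1)}$ (which lies in the image of the diagonal but whose square vanishes, so it can be absorbed into a kernel product $E_{12}^{(2)}\cdots$ reshuffled appropriately); alternatively, replace one of the differences by $e_{1j}^{(1)}e_{1j}^{(2)} = -E_{1j}^{(2)}e_{1j}^{(2)}$, doubling up one index over $\mathbb{Z}/2$. The precise bookkeeping of that extra $\delta_m$ factor is the delicate point and must be done with care.

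For the matching upper bound $\cl_s(\F(\mathbb{R}^m,n))\le s(n-1)-1+\delta_m$, I would argue that $H^*(\F(\mathbb{R}^m,n))$ is a free module with a basis of ``non-broken-circuit'' monomials in the $e_{ij}$, each of length at most $n-1$, so that $H^*(\F(\mathbb{R}^m,n)^s)$ is concentrated in multidegrees $(d_1,\ldots,d_s)$ with each $d_r\le (n-1)(m-1)$, hence the total cohomological dimension is $s(n-1)(m-1)$. A product of kernel classes $E_{ij}^{(r)}$ of length $L$ lives in degree $L(m-1)$, and one shows it must vanish once $L$ exceeds the stated bound: when $m$ is odd, a nonzero product in top-ish degree forces, in at least one slot $r\ge 2$, a factor landing in degree $0$, which is impossible for a difference $E_{ij}^{(r)}$ unless that slot is ``used up'' — a parity/dimension count gives $L\le s(n-1)-1$; when $m$ is even the $\mathbb{Z}/2$ cohomology is one dimension larger per slot only in the combined count, giving the extra $+1$. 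I expect the upper bound's dimension-counting step — correctly pinning down why the first slot is restricted to degree $\le (n-1)(m-1)-(m-1)$ for kernel elements while the others are not, accounting for exactly the $\delta_m$ discrepancy — to be the main obstacle, and it is where the argument of~\cite{FY} will need to be adapted most carefully to the higher ($s\ge 3$) setting.
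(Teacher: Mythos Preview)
Your proposal has the two parities reversed, and this propagates into both halves of the argument.  Since $\delta_m=1$ when $m$ is odd, it is the \emph{odd} case that has the longer cup-length $s(n-1)$ and the \emph{even} case that drops to $s(n-1)-1$.  Your lower-bound element $\prod_{j=2}^{n}\prod_{r=2}^{s}E_{1j}^{(r)}$ has only $(s-1)(n-1)$ factors, and your equation $(s-1)(n-1)+1=s(n-1)-1+\delta_m$ is false except when $n=3$: in general $(s-1)(n-1)=s(n-1)-(n-1)$ is short of the target by $n-2$ factors in the even case and by $n-1$ in the odd case.  (Also, the claim that ``exactly one monomial survives'' in the expansion is not correct: for each fixed $j$ you may pick $-e_{1j}^{(1)}$ from a single factor, producing further nonzero terms.)  The paper's lower-bound witnesses are larger products: for $m$ odd one takes $\prod_{i=2}^{n}\bigl(A_{i1}^{(1)}+\cdots+A_{i1}^{(s-1)}-(s-1)A_{i1}^{(s)}\bigr)^{s}$, a product of $s(n-1)$ kernel factors whose expansion collapses to a nonzero multiple of a single basis monomial; for $m$ even one uses an element $\mu_s$ combining the $(s-1)(n-1)$ factors $A_{i1}^{(1)}-A_{i1}^{(\ell)}$ with the $n-2$ extra factors $A_{i2}^{(1)}-A_{i2}^{(2)}$, and shows $A_{21}^{(1)}\mu_s\neq0$ by induction on $s$ over $\mathbb{Z}_2$.

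For the upper bound your plan is also inverted: the trivial dimension count gives only $\cl_s\le s(n-1)$, which suffices for odd~$m$; the nontrivial inequality $\cl_s<s(n-1)$ is needed for \emph{even}~$m$.  The paper does not obtain this by an internal cohomological argument of the kind you sketch.  Instead it observes that $\F(\mathbb{C},n)\cong\mathbb{C}^{*}\times\F(\mathbb{C},n)/\mathbb{C}^{*}$ with the second factor of the homotopy type of an $(n-2)$-complex, so subadditivity of $\TC_s$ gives $\TC_s(\F(\mathbb{C},n))\le (s-1)+s(n-2)=s(n-1)-1$, hence $\cl_s\le s(n-1)-1$; for general even $m$ one then notes the cohomology ring differs only by a grading rescaling.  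Your dimension-count idea (``the first slot is restricted to degree $\le(n-2)(m-1)$ for kernel elements'') is not obviously true and would itself need a proof at least as hard as the proposition.
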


In preparation for the proof of Proposition~$\ref{tcl}$, recall from~\cite{FH,OT} that the cohomology ring $H^*(\F(\mathbb{R}^m, n))$ is generated by elements $A_{ij}\in H^{m-1}(\F(\mathbb{R}^m, n))$ for $1\leq j<i\leq n$ subject only to the relations
\begin{eqnarray}
A^2_{ij}&=&0\quad\mbox{and}\label{exterior}\\
A_{ik} A_{ij}&=&(A_{ik}-A_{ij})A_{kj}\quad\mbox{for}\quad i>k>j.\label{YBrelations}
\end{eqnarray}
In particular, the monomials $A_{i_1j_1}\cdots A_{i_rj_r}$ with $i_u\neq i_v$ for $u\neq v$ form an additive basis. Order of factors will not be an issue  as it suffices to work with $\mathbb{Z}_2$ coefficients when $m$ is even---but we will have to use $\mathbb{Z}$-coefficients for an odd $m$.

\begin{proof}[Proof of Proposition~$\ref{tcl}$]
Let $w^{(\ell)}$ denote the pull-back of a cohomology class $w$ under map $F(\mathbb{R}^m,n)^s \to F(\mathbb{R}^m,n)$ projecting onto the $\ell$-th cartesian coordinate. The element
\begin{equation}\label{longitudmaxima}
\pi =\prod^n_{i=2} \left(A^{(1)}_{i1}+A^{(2)}_{i1}+\cdots +A^{(s-1)}_{i1}-(s-1) A^{(s)}_{i1}\right)^s
\end{equation}
is a product of $s(n-1)$ factors, all of which clearly lie in the kernel of the iterated diagonal $\F(\mathbb{R}^m,n)\to \F(\mathbb{R}^m,n)^s$. Therefore, the equality $\cl_s (\F(\mathbb{R}^{odd}, n))= s(n-1)$ follows from the considerations in the paragraph previous to Proposition~\ref{tcl} together with the next computation giving the non-triviality of $\pi$ for odd $m$.

\medskip
In view of~(\ref{exterior}), for $i=2, \ldots , n$ we have
\begin{eqnarray*}
\lefteqn{\!\!\!\!\!\!\!\!\!\!\!\!\!\!\!\!\!\!\left(A_{i1}^{(1)} +A^{(2)}_{i1} +\cdots + A^{(s-1)}_{i1}-(s-1)A^{(s)}_{i1}\right)^s=}
\\
&=& sA^{(1)}_{i1} \left(A^{(2)}_{i1}+\cdots + A^{(s-1)}_{i1}-(s-1) A^{(s)}_{i1}\right)^{s-1}\\
&=&sA^{(1)}_{i1} \left((s-1) A^{(2)}_{i1}\right)\left(A^{(3)}_{i1}+\cdots + A^{(s-1)}_{i1}-(s-1) A^{(s)}_{i1}\right)^{s-2}\\
&=& \cdots\\
&=& sA^{(1)}_{i1} \left((s-1) A^{(2)}_{i1}\right)\cdots\left(3A^{(s-2)}_{i1}\right)\left(A^{(s-1)}_{i1}-(s-1) A^{(s)}_{i1}\right)^2\\
&=& s! (1-s)A^{(1)}_{i1} A^{(2)}_{i1}\cdots A^{(s-1)}_{i1} A^{(s)}_{i1}.
\end{eqnarray*}
So
$$
\pi = \prod^n_{i=2} \left(s!(1-s) A^{(1)}_{i1} A^{(2)}_{i1}\cdots A^{(s)}_{i1}\right)
= \left(s! (1-s)\rule{0mm}{3.8mm}\right)^{n-1} \mu^{(1)}\mu^{(2)}\cdots \mu^{(s)} \neq 0
$$
since $\mu=A_{21}A_{31}\cdots A_{n1} \neq 0$ (indeed, $\mu$ is a basis element).

\medskip
The rest of the proof focuses on the case when $m$ is even and, for convenience, is dealt with in the next independent result.
\end{proof}

\begin{lemma}\label{indpdt}
Let $m$ be an even positive integer. Then:
\begin{enumerate}[(i)]
\item\label{g1} $\cl_s(\F(\mathbb{R}^{m}, n))<s(n-1)$.
\item\label{g2} The element
$$
\mu_s=\left(\prod\left(A_{i1}^{(1)}-A_{i1}^{(\ell)}\right)\rule{0mm}{6.8mm}\right)\left(\rule{0mm}{6.8mm}\prod_{i=3}^n\left(A_{i2}^{(1)}-A_{i2}^{(2)}\right)\right),
$$
where the first product is taken over all pairs $(i,\ell)$ with $2\leq i\leq n$ and $2\leq\ell\leq s$, is a non-zero product of $s(n-1)-1$ factors, all of which lie in the kernel of the morphism induced by the iterated diagonal map $\F(\mathbb{R}^m,n)\to\F(\mathbb{R}^m,n)^s$.
\end{enumerate}
\end{lemma}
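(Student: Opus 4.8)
As noted before Proposition~\ref{tcl}, for even $m$ one may use $\mathbb{Z}_2$ coefficients throughout, so that $A_{ij}^2=0$, all classes commute, and relation~(\ref{YBrelations}) reads $A_{ik}A_{ij}=A_{kj}\left(A_{ik}+A_{ij}\right)$. The engine for both statements is the Fadell--Neuwirth fibration $\F(\R^m,n)\to\F(\R^m,n-1)$ forgetting the last point; its fibre, $\R^m$ minus $n-1$ points, is homotopy equivalent to $\bigvee_{n-1}S^{m-1}$, the Serre spectral sequence degenerates, and $H^*(\F(\R^m,n))$ is a free module over $R:=H^*(\F(\R^m,n-1))$ on the basis $\{1,A_{n1},\dots,A_{n,n-1}\}$. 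Hence $H^*(\F(\R^m,n)^s)=H^*(\F(\R^m,n))^{\otimes s}$ is free over $R^{\otimes s}$ on the monomials $c_1\otimes\cdots\otimes c_s$ with every $c_k\in\{1,A_{n1},\dots,A_{n,n-1}\}$. I would prove both parts by induction on $n$, the base $n=2$ being the computation of $\TC_s(\F(\R^m,2))=\TC_s(S^{m-1})$ from~\cite{bgrt}.

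I would deal with the second assertion first. The factor count of $\mu_s$ is $(n-1)(s-1)+(n-2)=s(n-1)-1$, and each factor, being of the shape $w^{(1)}-w^{(\ell)}$, dies under the iterated diagonal; so $\mu_s$ is a product of $s(n-1)-1$ zero-divisors. Pulling out the factors involving the point $n$ gives $\mu_s=\nu_n\cdot\mu'$, where $\mu'\in R^{\otimes s}$ is the corresponding element built from the first $n-1$ points (non-zero by the inductive hypothesis) and $\nu_n=\left(\prod_{\ell=2}^{s}\left(A_{n1}^{(1)}-A_{n1}^{(\ell)}\right)\right)\left(A_{n2}^{(1)}-A_{n2}^{(2)}\right)$. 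Expanding $\nu_n$ in the $R^{\otimes s}$-basis above --- using $A_{nl}^2=0$ and $A_{nl}A_{nj}=A_{lj}(A_{nl}+A_{nj})$ to reduce the at most two occurrences of a new generator inside any one cartesian coordinate --- one checks that the basis monomial $e:=A_{n2}^{(1)}\otimes A_{n1}^{(2)}\otimes A_{n1}^{(3)}\otimes\cdots\otimes A_{n1}^{(s)}$ occurs in $\nu_n$ with coefficient exactly $1\in R^{\otimes s}$. By freeness over $R^{\otimes s}$ the coefficient of $e$ in $\mu_s=\mu'\nu_n$ is then $\mu'\neq0$, whence $\mu_s\neq0$; in particular $\cl_s(\F(\R^m,n))\geq s(n-1)-1$.

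For the first assertion: since $\F(\R^m,n)$ is $(m-2)$-connected, the zero-divisor ideal is concentrated in degrees $\geq m-1$, so a product of $s(n-1)$ zero-divisors equals $z_1\cdots z_{s(n-1)}$ with each $z_a\in\ker\left(H^{m-1}(\F(\R^m,n)^s)\to H^{m-1}(\F(\R^m,n))\right)=\mathrm{span}\{A_{ij}^{(k)}-A_{ij}^{(1)}\}$, and this product lands in the top-degree group $H^{s(n-1)(m-1)}(\F(\R^m,n)^s)$. Splitting each $z_a=z_a'+z_a''$ into its component in the new generators $A_{n\bullet}$ and its ``old'' component (a degree $m-1$ zero-divisor of $\F(\R^m,n-1)^s$), expanding the product, and invoking $\cl(\F(\R^m,n-1))=n-2$ together with the inductive hypothesis for $n-1$, a dimension count --- the old part of a summand cannot exceed $\dim\F(\R^m,n-1)^s=s(n-2)(m-1)$ while the total degree is fixed at $s(n-1)(m-1)$ --- shows that the only contributing summands have old part filling the top degree of $\F(\R^m,n-1)^s$ and exactly one new generator in each cartesian coordinate. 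Thus $z_1\cdots z_{s(n-1)}=\sum\xi_{l_1\cdots l_s}\cdot\left(A_{nl_1}^{(1)}\otimes\cdots\otimes A_{nl_s}^{(s)}\right)$ with each $\xi_{l_1\cdots l_s}$ a top-degree class of $\F(\R^m,n-1)^s$, and one is left to show every $\xi_{l_1\cdots l_s}$ vanishes mod $2$. The base case $n=2$ is clean: there $z_1\cdots z_s$ is the $\mathbb{Z}_2$-permanent --- that is, the determinant --- of an $s\times s$ matrix each of whose rows sums to zero (the zero-divisor condition), hence it vanishes.

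The main obstacle is exactly the last step of the induction in the first assertion: one must show that, after collecting the contributions to a fixed monomial $A_{nl_1}^{(1)}\otimes\cdots\otimes A_{nl_s}^{(s)}$, the residual coefficient $\xi_{l_1\cdots l_s}$ is itself a sum of products of $s(n-2)$ genuine zero-divisors of $\F(\R^m,n-1)^s$, so that the inductive hypothesis applies. This forces a careful use of the reduced Yang--Baxter relation $A_{nl}A_{nj}=A_{lj}(A_{nl}+A_{nj})$ to convert repeated new generators within a coordinate into old classes $A_{lj}$, and a bookkeeping of how these recombine across coordinates and with the $z_a''$ into zero-divisors. A cleaner alternative --- probably the route I would ultimately take for the general case --- is to drop the induction on $n$ and argue directly in the (regraded) Orlik--Solomon algebra of the braid arrangement with a non-broken-circuit basis, in the style of~\cite{FY}, establishing the required mod-$2$ vanishing by a purely combinatorial computation.
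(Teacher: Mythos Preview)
Your argument for part~(ii) is correct and is a genuine alternative to the paper's. You induct on $n$ using the Fadell--Neuwirth freeness of $H^*(\F(\R^m,n))$ over $R=H^*(\F(\R^m,n-1))$ and extract the coefficient of a well-chosen basis monomial; the paper instead inducts on $s$, multiplying $\mu_s$ by the auxiliary class $A_{21}^{(1)}$, computing the resulting $w_2$ explicitly (as a sum of $2^{n-2}$ basis elements), observing $w_2 A_{i1}^{(1)}=0$, and then showing $w_{s+1}=w_s\cdot A_{21}^{(s+1)}\cdots A_{n1}^{(s+1)}$ via K\"unneth. Both work; yours is arguably more structural, theirs requires a slightly longer base computation.

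Part~(i), however, has the gap you yourself flag, and it is fatal for this route. After expanding $z_1\cdots z_{s(n-1)}$ and reducing via $A_{nl}A_{nj}=A_{lj}(A_{nl}+A_{nj})$, the coefficients $\xi_{l_1\cdots l_s}\in H^{s(n-2)(m-1)}(\F(\R^m,n-1)^s)$ acquire factors $A_{lj}$ coming from the Yang--Baxter reduction, and these are \emph{not} zero-divisors. There is no evident mechanism by which the various contributions reassemble into sums of $s(n-2)$-fold products of zero-divisors of $\F(\R^m,n-1)^s$, so the inductive hypothesis cannot be invoked. Your clean base case $n=2$ (a row-sum-zero determinant) does not propagate.

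The paper sidesteps this entirely by a short topological argument rather than a cohomological one. For $m=2$ it uses the splitting $\F(\mathbb{C},n)\cong\mathbb{C}^*\times\bigl(\F(\mathbb{C},n)/\mathbb{C}^*\bigr)$, where the second factor is the complement of a hyperplane arrangement of rank $n-2$ and hence has the homotopy type of an $(n-2)$-dimensional complex; subadditivity of $\TC_s$ then gives $\TC_s(\F(\mathbb{C},n))\leq(s-1)+s(n-2)<s(n-1)$, which forces $\cl_s<s(n-1)$. For general even $m$ one observes that the cohomology ring differs from the $m=2$ case only by a uniform regrading, so the cup-length statement transfers verbatim. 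This is much shorter than any inductive combinatorial argument and is what you should use.
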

\begin{proof}
The case of~(\ref{g1}) is easy although a bit short-circuited (just as is the $s=2$ case analyzed in~\cite{FY}): The key point comes from the homeomorphim $\F(\mathbb{C},n)\cong\mathbb{C}^*\times\F(\mathbb{C},n)/\mathbb{C}^*$ (cf.~\cite[Proposition~5.1]{OT}). Here $\F(\mathbb{C},n)/\mathbb{C}^*\subset\mathbb{C}\mathrm{P}^{n-1}$ which, as explained in the proof of Theorem~6 in~\cite{FY}, can be identified with a (not-necessarily central) complex arrangement of rank $n-2$, so that $\F(\mathbb{C},n)/\mathbb{C}^*$ has the homotopy type of a CW~complex of dimension $n-2$. Thus the subadditivity of $\TC_s$ (\cite[Proposition~3.11]{bgrt}) yields $\TC_s(\F(\mathbb{C},n))<s(n-1)$. This of course implies~(\ref{g1}) for $m=2$; the general case of an even $m$ follows by noticing that the cohomology ring of $\F(\mathbb{C},n)$ differs from that for $\F(\mathbb{R}^{even},n)$ only by a `grading homothety'.

\smallskip
Settling~(\ref{g2}) requires a cohomological calculation which, although similar, is slightly less direct than the one handling the non-triviality of the element in~(\ref{longitudmaxima}). In order to simplify matters, we note that it suffices to do the calculation with $\mathbb{Z}_2$-coefficients, where signs can safely be ignored, and that it is enough to show the non-triviality of
\begin{equation}\label{elew}
w_s=A_{21}^{(1)}\mu_s.
\end{equation}
We proceed by induction on $s$, noticing that the grounding case $s=2$ is done by Farber and Yuzvinsky (see the case of reflection arrangements for reflection groups of types $A_n$ at the end of~\cite[Section~3]{FY}). In detail, using the mod 2 analogue of~(\ref{exterior})  we get
\begin{eqnarray}
w_2&=&A_{21}^{(1)}\left(\,\prod_{i=2}^n\left(A_{i1}^{(1)}+A^{(2)}_{i1}\right)\right)\left(\,\prod_{i=3}^n\left(A_{i2}^{(1)}+A^{(2)}_{i2}\right)\right)\nonumber\\
&=&A_{21}^{(1)}A_{21}^{(2)}\prod_{i=3}^{n}\left[\left(
A^{(1)}_{i1}+A^{(2)}_{i1}
\right)\left(
A^{(1)}_{i2}+A^{(2)}_{i2}
\right)\right].\label{auxiliar}
\end{eqnarray}
In view of~(\ref{YBrelations}), $A^{(r)}_{i1}A^{(r)}_{i2}$ is divisible by $A^{(r)}_{21}$ ($r=1,2$), so that~(\ref{auxiliar}) reduces to
\begin{equation}\label{todos}
w_2=A_{21}^{(1)}A_{21}^{(2)}\prod_{i=3}^{n}\left(
A^{(1)}_{i1}A^{(2)}_{i2}
+A^{(2)}_{i1}A^{(1)}_{i2}
\right)=\sum\left(\prod_{i=2}^nA^{(1)}_{i\,j_1(i)}\prod_{i=2}^nA^{(2)}_{i\,j_2(i)}\right)
\end{equation}
where $j_1(2)=j_2(2)=1$, and $\{j_1(i),j_2(i)\}=\{1,2\}$ for $i\geq3$. Note that the expression on the right-hand side of~(\ref{todos}) is non-zero (grounding the induction) since it is in fact a sum of $2^{n-2}$ different basis elements. It also follows that
\begin{equation}\label{porcierto}
w_2 A^{(1)}_{i1}=0
\end{equation}
for any $i$. As for the inductive step,
\begin{equation}\label{elewtmas1}
w_{s+1}=w_s\prod_{i=2}^n\left(A^{(1)}_{i1}+A^{(s+1)}_{i1}\right)=w_s\hspace{.4mm}A^{(s+1)}_{21}A^{(s+1)}_{31}\cdots A^{(s+1)}_{n1}
\end{equation}
for $s\geq2$, where the last equality follows from~(\ref{porcierto}) and the fact that $w_2$ divides $w_t$. But, as an element of $$H^*\!\left(\F(\mathbb{R}^m,n)^{s+1};\mathbb{Z}_2\right)=H^*(\F(\mathbb{R}^m,n)^s;\mathbb{Z}_2)\otimes H^*(\F(\mathbb{R}^m,n);\mathbb{Z}_2),$$
the element on the right hand side of~(\ref{elewtmas1}) is non-trivial since, by induction, $w_s\neq0$ as an element of $H^*(\F(\mathbb{R}^m,n)^s;\mathbb{Z}_2)$.
\end{proof}
\begin{proof}[Proof of Theorem~$\ref{htcofc}$]
It only remains to prove
\begin{equation}\label{elbuenoT}
\TC_s(\F(\mathbb{R}^{m},n))<s(n-1)
\end{equation}
for $m$ even. We can assume $m\geq4$, in view of the proof of part (i) in Lemma~\ref{indpdt}. Note the single obstruction to~(\ref{elbuenoT}) lies in
$$
H^{s(n-1)(m-1)}\left(\F(\mathbb{R}^m,n)^s\,;\,\pi_{s(n-1)(m-1)-1}\left(\left(\Omega\F(\mathbb{R}^m,n)^{s-1}\right)^{\star(s(n-1))}\right)\rule{0mm}{6mm}\right).
$$
In order to get a hold on this obstruction, we use Theorem~\ref{classabstr}. Since the fiber of $$p:=e_s:\F(\mathbb{R}^m,n)^{J_s}\to\F(\mathbb{R}^m,n)^s,$$ $\Omega\F(\mathbb{R}^m,n)^{s-1}$, is $(m-3)$-connected, there are no obstructions for picking a section $\phi$ over the $(m-2)$-skeleton of $\F(\mathbb{R}^m,n)^s$ (so $k:=m-2$ in Theorem~\ref{classabstr}). As noted in Remark~\ref{kernel}, the corresponding class~(\ref{eta}) containing the obstruction to the extension of $\phi$ to the $(m-1)$-skeleton does not depend on the chosen $\phi$, and lies in the kernel of the morphism induced by $e_s$, i.e.~in the kernel of the morphism induced by the iterated diagonal~(\ref{thin}). Taking $\ell:=s(n-1)-1$ in Theorem~\ref{classabstr}, we get a section of $e_s(s(n-1)-1)$ over the $(s(n-1)(m-1)-1)$-skeleton of $\F(\mathbb{R}^m,n)^s$ whose obstruction to extending to the $(s(n-1)(m-1))$-skeleton lies in the corresponding class~(\ref{potencia}). Since $\F(\mathbb{R}^m,n)^s$ has the homotopy type of a CW complex of dimension $s(n-1)(m-1)$, the proof is complete in view of item~(i) in Lemma~\ref{indpdt}, which gives the the triviality of the aforementioned class~(\ref{potencia}).
\end{proof}

\section{Stationary obstacles}
This section deals with the proof of Theorem~$\ref{princi1}$ for $p\geq1$.
The case $p=1$ follows from Theorem~\ref{htcofc} and the fact that the fiber inclusion in the fibration
$$
\F(\mathbb{R}^m-Q_1,n)\to\F(\mathbb{R}^m,n+1)\to\mathbb{R}^m
$$
is a homotopy equivalence. On the other hand, the case $n=1$ follows from~\cite[Corollary~2]{glo}:
$$
\TC_s(\F(\mathbb{R}^m-Q_p,1))=\TC_s(\vee_p S^{m-1})=\begin{cases}
s-1, & \mbox{if $p=1$ and $m$ even;} \\ s, & \mbox{otherwise.}
\end{cases}
$$
Thus, we focus in this section on the $n,p\geq2$ case of Theorem~\ref{princi1}, namely:

\begin{theorem}\label{np2}
For $m,n,p,s\geq2$, $\TC_s(\F(\mathbb{R}^m-Q_p,n))=sn$.
\end{theorem}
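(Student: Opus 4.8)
The plan is to prove the two bounds $\TC_s(\F(\R^m-Q_p,n))\leq sn$ and $\TC_s(\F(\R^m-Q_p,n))\geq sn$ separately. For the upper bound I would argue exactly as in the paragraph preceding Proposition~\ref{tcl}: since $p\geq 2$, the space $\R^m-Q_p$ is $(m-2)$-connected (being homotopy equivalent to a wedge of $p$ copies of $S^{m-1}$), and the classical Fadell--Neuwirth fibrations
$$
\F(\R^m-Q_p,k)\to\F(\R^m-Q_p,k-1),\qquad k=2,\dots,n,
$$
with fibre $\R^m-Q_{p+k-1}\simeq\bigvee_{p+k-1}S^{m-1}$, exhibit $\F(\R^m-Q_p,n)$ as an $(m-2)$-connected space having the homotopy type of a CW complex of dimension $n(m-1)$. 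Hence $\TC_s(\F(\R^m-Q_p,n))\leq\frac{s\,n(m-1)}{(m-2)+1}=sn$ by \cite[Theorem~3.9]{bgrt}.

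For the lower bound the essential input is a handle on $H^*(\F(\R^m-Q_p,n))$. For $i\in\{1,\dots,n\}$ and $k\in\{1,\dots,p\}$ let $B_{ki}\in H^{m-1}(\F(\R^m-Q_p,n))$ be the pull-back of the $k$-th standard generator of $H^{m-1}(\R^m-Q_p)$ along the evaluation map $\F(\R^m-Q_p,n)\to\R^m-Q_p$ at the $i$-th point, and let $A_{i'i}$, $1\leq i'<i$, be the usual class linking the $i'$-th and $i$-th points. Applying the Leray--Hirsch theorem repeatedly to the Fadell--Neuwirth tower above shows that $H^*(\F(\R^m-Q_p,n))$ is a free module with basis the monomials $\prod_{i=1}^n c_i$, where $c_i$ runs over $\{1\}\cup\{B_{ki}:1\leq k\leq p\}\cup\{A_{i'i}:1\leq i'<i\}$. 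The key consequence is that, since $\R^m-Q_p$ has the homotopy type of a wedge of spheres, all products of the $B$'s attached to a single point vanish:
$$
B_{1i}^2=B_{2i}^2=B_{1i}B_{2i}=0\qquad(1\leq i\leq n).
$$
Thus, for a fixed $i$, the pair $B_{1i},B_{2i}$ behaves cohomologically like $\bigvee_2 S^{m-1}$, and it is this vanishing that makes the obstacles contribute an extra $s$ to the complexity of each of the $n$ moving points.

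The cup-length computation then runs parallel to (and, since it only uses the relations just displayed, more simply than) the evaluation of $w_2$ in Lemma~\ref{indpdt}. Working throughout with $\mathbb{Z}_2$-coefficients and writing $u^{(j)}$ for the pull-back of $u$ along the $j$-th cartesian projection $\F(\R^m-Q_p,n)^s\to\F(\R^m-Q_p,n)$, I consider the $n(s-1)+n=sn$ classes
$$
B_{1i}^{(1)}+B_{1i}^{(j)}\ \ (2\leq j\leq s)\qquad\text{and}\qquad B_{2i}^{(1)}+B_{2i}^{(2)},\qquad 1\leq i\leq n,
$$
each of which lies in the kernel of the morphism induced by the iterated diagonal $\F(\R^m-Q_p,n)\to\F(\R^m-Q_p,n)^s$. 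Using $B_{1i}^2=B_{2i}^2=B_{1i}B_{2i}=0$ one computes, for each $i$,
$$
\left(\prod_{j=2}^s\left(B_{1i}^{(1)}+B_{1i}^{(j)}\right)\right)\left(B_{2i}^{(1)}+B_{2i}^{(2)}\right)=T_1^{(i)}+T_2^{(i)},
$$
where $T_1^{(i)}$ (respectively $T_2^{(i)}$) carries $B_{2i}$ in the first (respectively second) cartesian coordinate and $B_{1i}$ in all remaining coordinates. Multiplying the $n$ resulting binomials, the product of all $sn$ classes equals $\sum_{\varepsilon\in\{1,2\}^n}\prod_{i=1}^n T_{\varepsilon_i}^{(i)}$. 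In every cartesian coordinate each summand $\prod_{i=1}^n T_{\varepsilon_i}^{(i)}$ restricts to a product $\prod_{i=1}^n B_{c_i,i}$ with $c_i\in\{1,2\}$, which is one of the basis monomials of $H^{n(m-1)}(\F(\R^m-Q_p,n);\mathbb{Z}_2)$ identified above; hence each summand is a non-zero basis element of the top-dimensional Künneth summand $H^{n(m-1)}(\F(\R^m-Q_p,n);\mathbb{Z}_2)^{\otimes s}$ of $H^{sn(m-1)}(\F(\R^m-Q_p,n)^s;\mathbb{Z}_2)$. Distinct $\varepsilon$ are already distinguished by the first coordinate, so the $2^n$ summands are pairwise distinct basis elements and the product does not vanish. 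This gives $\cl_s(\F(\R^m-Q_p,n))\geq sn$, hence $\TC_s(\F(\R^m-Q_p,n))\geq sn$, which together with the upper bound finishes the proof.

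The step I expect to demand the most care is the cohomological preparation for the lower bound: setting up the module basis coming from the Fadell--Neuwirth tower and, above all, the single-point vanishing relations $B_{ki}B_{\ell i}=0$, which are what force the surviving monomials in the product to be exactly the basis monomials $\prod_i B_{c_i,i}$ (with no further cancellation, the latter being handled by the $\mathbb{Z}_2$-coefficients). The hypothesis $p\geq 2$ is used precisely here, so that both $B_{1i}$ and $B_{2i}$ occur among the module generators; the cases $p\leq 1$ are treated separately at the start of the section. The remaining bookkeeping --- that the $2^n$ terms are distinct and individually non-zero --- is routine once the module structure is understood, in the same spirit as the treatment of $w_2$ in Lemma~\ref{indpdt}.
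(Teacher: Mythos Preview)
Your proof is correct and follows essentially the same approach as the paper: your classes $B_{ki}$ are precisely the paper's $A_{(p+i)k}$, your vanishing relation $B_{ki}B_{\ell i}=0$ is the paper's relation~(\ref{relazero}), and your $sn$-fold product is exactly the element $\nu_s$ considered there. The only cosmetic difference is that you verify non-vanishing by a direct expansion (reducing each $i$-block to the binomial $T_1^{(i)}+T_2^{(i)}$), whereas the paper argues by induction on $s$; both computations exploit the same relations and yield the same sum of $2^n$ distinct basis monomials.
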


The case $m\leq3$ and $s=2$ in Theorem~\ref{np2} is~\cite[Theorems~5.1 and~6.1]{MR2359030}.

\medskip
In preparation for the proof of Theorem~\ref{np2}, we start by recalling the multiplicative structure of the cohomology  of $\F(\mathbb{R}^m-Q_p,n)$ for any ring of coefficients. The following facts can be found in~\cite{FH}.

\medskip
Consider the fibration
$$
\F(\mathbb{R}^m-Q_p,n)\stackrel{\iota}\to\F(\mathbb{R}^m,p+n)\stackrel{\pi}\to\F(\mathbb{R}^m,p)
$$
where $\pi$ projects a $p+n$ tuple to its first $p$ coordinates. The corresponding Serre spectral sequence has a trivial system of local coefficients, and collapses from its second term. In particular, $\iota^*$ is surjective and its kernel is generated by the elements of degree one in the image of $\pi^*$. An additive basis for $H^*(\F(\mathbb{R}^m-Q_p,n))$ is then given by the ($\iota^*$-images of the) monomials $A_{i_1 j_1}\cdots A_{i_\ell j_\ell}$ in $H^*(\F(\mathbb{R}^m,p+n))$ satisfying $p+n\geq i_1>\cdots>i_\ell\geq p+1$. Note that~(\ref{exterior}) and~(\ref{YBrelations}) give the relation
\begin{equation}\label{relazero}
A_{i,k}A_{i,j}=0\quad\mbox{for}\quad j,k\leq p.
\end{equation}
in $H^*(\F(\mathbb{R}^m-Q_p,n))$. In particular, for $m\geq3$, the lack of a nontrivial fundamental group and torsion in the cohomology  imply that $\F(\mathbb{R}^m-Q_p,n)$ is homotopy equivalent to an $(m-2)$-connected CW complex of dimension $n(m-1)$. The corresponding homotopy model for $m=2$ follows from the results in~\cite{OT}.

\begin{remark}\label{LScatQp}{\em
The above considerations easily give $\cat(\F(\mathbb{R}^m-Q_p,n))=n$ (the cup-length lower bound agrees with the upper bound given by the dimension-by-connectivity ratio). This is of course the analogue of~\cite[Theorem~1.2]{MR2508218}. The relation $\cat(\F(\mathbb{R}^m-Q_p,n))=\cat(\F(\mathbb{R}^m,n))+1$ is closely related to the fact that the parity of $m$ is irrelevant in the fourth instance on the right-hand side of~$(\ref{global})$.
}\end{remark}

\begin{proof}[Proof of Theorem~$\ref{np2}$]
Dimension vs.~connectivity considerations give $$\TC_s(\F(\mathbb{R}^m-Q_p,n))\leq ns$$
(cf.~\cite[Theorem~3.9]{bgrt}, for instance). So it suffices to find a nonzero product of $ns$ factors all of which lie in the kernel of the morphism induced in cohomology by the iterated (thin) diagonal $\F(\mathbb{R}^m-Q_p,n)\to\F(\mathbb{R}^m-Q_p,n)^s$. This can be accomplished for $m$ odd with a calculation identical to the one in the proof of Proposition~\ref{tcl} dealing with the element in~(\ref{longitudmaxima}): this time
$$
\prod^n_{i=1} \left(A^{(1)}_{(p+i)1}+A^{(2)}_{(p+i)1}+\cdots +A^{(s-1)}_{(p+i)1}-(s-1) A^{(s)}_{(p+i)1}\right)^s\neq0.
$$
Yet, in accordance to the last assertion in Remark~\ref{LScatQp}, we offer an argument that works for all $m$. Actually, we work with $\mathbb{Z}_2$-coefficients, where not only can signs be ignored, but the cohomology ring $H^*(\F(\mathbb{R}^m-Q_p,n);\mathbb{Z}_2)$ is really independent of $m$ (except for a shift in gradings which, nevertheless, has no impact on the conclusion we want).

\medskip
The element we are after is
$$
\nu_s=\left(\prod\left(A_{(p+i)1}^{(1)}+A_{(p+i)1}^{(\ell)}\right)\rule{0mm}{6.8mm}\right)\left(\rule{0mm}{6.8mm}\prod_{i=1}^n\left(A_{(p+i)2}^{(1)}+A_{(p+i)2}^{(2)}\right)\right),
$$
where the first product is taken over all pairs $(i,\ell)$ with $1\leq i\leq n$ and $2\leq\ell\leq s$. We show $\nu_s\neq0$ by induction on $s$, noticing that the grounding case $s=2$ is done by Farber, Grant, and Yuzvinsky (see the proof of Theorem~6.1 in~\cite{MR2359030}). In detail, since the cohomology of $\F(\mathbb{R}^m-Q_p,n)$ vanishes above dimension $n(m-1)$, the mod 2 analogue of~(\ref{relazero}) easily yields
$$\nu_2=\sum_{(u_1,\ldots,u_n)\in\{1,2\}^n} A_{(p+n)u_1} A_{(p+n-1)u_2} \cdots A_{(p+1)u_n}\otimes A_{(p+n)(3-u_1)} A_{(p+n-1)(3-u_2)} \cdots A_{(p+1)(3-u_n)},$$
a nonzero element since all the summands in the above expression are distinct basis elements. Note that $\nu_2$ lies in the top nonzero dimension of $H^*(\F(\mathbb{R}^m-Q_p,n))^{\otimes s}$ so that $\nu_2A_{(p+i)1}^{(1)}$ vanishes for all $i=1,\ldots,n$. In particular, for $s\geq2$,
$$
\nu_{s+1}=\nu_s\,\prod_{i=1}^n\left(A_{(p+i)1}^{(1)}+A_{(p+i)1}^{(s+1)}\right)=\nu_s\,A_{(p+n)1}^{(s+1)}\cdots A_{(p+1)1}^{(s+1)}
$$
which is a nonzero element in
$$H^*\!\left(\F(\mathbb{R}^m-Q_p,n)^{s+1};\mathbb{Z}_2\right)=H^*(\F(\mathbb{R}^m-Q_p,n)^s;\mathbb{Z}_2)\otimes H^*(\F(\mathbb{R}^m-Q_p,n);\mathbb{Z}_2)$$
by induction.
\end{proof}

 \end{document}